\DeclareMathOperator{\gop}{\mathbf g}
\theoremstyle{plain}
\newtheorem{proposition}{Proposition}
\newtheorem{lemma}{Lemma}
\newtheorem{corollary}{Corollary}
\theoremstyle{definition}
\title{Fractoconvex structures}
\author{Aidar~Dulliev}
\address{A.M.~Dulliev: Kazan National Research Technical University named after A.N. Tupolev-KAI (Russia)}
\email{dulliev@yandex.ru}
\subjclass[2010]{52A01, 52A30, 52A99}
\keywords{Convex sets, convex structures, convexity, abstract convexity, fractoconvexity, independent convexities}
\begin{document}

\begin{abstract}
We define a new structure on a space endowed with convexities, and call it a fractoconvex structure (or, a space with fractoconvexity). We introduce two operations on a set of fractoconvexities and in a special case we show that they satisfy the laws for a distributive lattice. We establish a connection between fractoconvex sets and convex sets using the concept of independent convexities, based on the possibility of representing a fractoconvex set as the intersection of its convex hulls. Finally, we consider some examples of fractoconvexities on the 2-sphere and on $\mathbb Z$.
\end{abstract}

\maketitle

\section{Introduction}
\label{sec1}
The concept of a convexity plays an important role in many topics of mathematics. In each of them the properties associated with convexity appear on an appropriate abstract level. The theory that deals with convexity and its applications from a general point of view was formed in the 1960--1980s and was called the theory of convex structures. A fairly complete exposition of the theory of convex structures can be found in the monographs \cite{1,2,3}. Following \cite{1}, we recall the most general viewpoint of the notion of convexity.

Let $X$ be a set. A {\itshape convexity} $\mathbf G\subset 2^X$ on $X$ is defined, as a rule, in two equivalent ways. In the first way, a collection $\mathbf G\subset 2^X$ is called a convexity on $X$ if $X\in\mathbf G$ and $\mathbf G$ is closed under arbitrary intersections of its elements. In the second way, a convexity is generated by a {\itshape convex hull operator} $\gop\colon 2^X\to 2^X$ such that, for any sets $A$, $B$ satisfying the inclusions $A\subset B\subset X$, the following conditions hold: $A\subset\gop A$, $\gop A\subset\gop B$, and $\gop\gop A=\gop A$; this convexity is in turn equal to $\mathbf G=\{A\subset X:\gop A=A\}$. A convexity $\mathbf G$ is called {\itshape finitely defined} if for every $A\subset X$ it follows that $\gop A=\bigcup\{\gop B: B\subset A,|B|<\infty\}$. A convexity $\mathbf G$ is called {\itshape $n$-ary} if $A\in\mathbf G\Leftrightarrow\forall B\subset A\bigl(|B|\leqslant n\Rightarrow\gop B\subset A\bigr)$ whenever $A\subset X$.

Now let the set $X$ be endowed with a family of convexities on it. Clearly, in $X$ we can construct new structures based on the convexities, and these structures are not necessarily convexities on $X$, but may have similarities with them. For example, in \cite{4}  the author proposed and investigated the notions of an $n$-semiconvex set and an $n$-biconvex set. We briefly recall these notions.

Let $\mathbf G_1$ and $\mathbf G_2$ be convexities on a set $X$, $\gop_1$ and $\gop_2$ be the convex hulls associated with them, respectively, and let $n$ be a natural number. A set $A\subset X$ is called {\itshape $n$-semiconvex} with respect to $\mathbf G_1$ and $\mathbf G_2$ if
\begin{equation}\label{1.1}
\forall B\subset A\bigl(|B|\leqslant n\Rightarrow\exists i\in\{1,2\}\;\gop_i B\subset A\bigr).
\end{equation}
A set $A\subset X$ is called {\itshape $n$-biconvex} with respect to $\mathbf G_1$ and $\mathbf G_2$ if
\begin{equation}\label{1.2}
\forall B\subset A\bigl(|B|\leqslant n\Rightarrow\forall i\in\{1,2\}\;\gop_i B\subset A\bigr).
\end{equation}

From (\ref{1.1}) and (\ref{1.2}) (see the quantifiers in the round brackets) it follows that, for fixed $n$, $\mathbf G_1$, and $\mathbf G_2$, the family of all $n$-semiconvex sets is not stable for intersections, but the family of all $n$-biconvex sets is in turn stable for intersections. Therefore, the family of all $n$-biconvex sets is a convexity on $X$. At the same time, it has been shown in \cite{4} that
\begin{itemize}
\item Some special $n$-semiconvex sets can be represented as the intersection of their convex hulls.
\item In some cases the theorems which are similar to the hyperplane separation theorems in $\mathbb R^n$ can be applied to $n$-semi- and $n$-biconvex sets.
\end{itemize}

In this paper the notions of $n$-semi- and $n$-biconvex set are generalized for an arbitrary, not necessarily finite, family of convexities. We define the notions of ``fractoconvexity'' (fractional convexity) and ``multiconvexity'' and investigate their properties.  We shall briefly consider the question of how fractoconvex sets can be represented by convex sets belonging to given convexities. Finally, we provide four examples to illustrate the notions and statements proposed in our paper.

\section{Fractoconvexities}
\label{sec2}
Let $\Lambda\ne\emptyset$ be an index set, $\mathbf G_\lambda$, $\lambda\in\Lambda$ be convexities on a set $X$, $\{M_i,|M_i|\geqslant 1\}$ be a partition of $\Lambda$ such that
\begin{gather*}
\forall i\bigl(\lambda_{1,2}\in M_i,\lambda_1\ne\lambda_2\Rightarrow\mathbf G_{\lambda_1}\ne\mathbf G_{\lambda_2}\bigr).
\end{gather*}
and, for every $i$, let $m_i$ be a cardinal number such that $1\leqslant m_i\leqslant |M_i|$. (Here and in what follows, by $|\cdot|$ we denote the cardinality of a set).

We say that a set $A$ is an {\itshape $n$-ary fractoconvex set of the type $\{(M_i,m_i)\}$ in $X$ with respect to the convexities $\mathbf G_\lambda$} (briefly, {\itshape $(n)\text{-}\bigvee\limits_i\dfrac{m_i}{\{\mathbf G_\lambda, \lambda\in M_i\}}$\--fractoconvex set in $X$}), if
\begin{gather*}
\forall B\subset A\Bigl(|B|\leqslant n\Rightarrow\bigl(\exists i\exists\Omega\subset M_i, |\Omega|=m_i: \bigcup\limits_{\lambda\in\Omega}\gop_\lambda B\subset A\bigr)\Bigr).
\end{gather*}
Similarly, we say that a set $A$ is an {\itshape $n$-ary multiconvex set in $X$ with respect to the convexities $\mathbf G_\lambda$} (briefly, {\itshape $(n)\text{-}\{\mathbf G_\lambda, \lambda\in \Lambda\}$\--multiconvex set in $X$}), if
\begin{gather*}
\forall B\subset A\Bigl(|B|\leqslant n\Rightarrow\bigcup\limits_{\lambda\in\Lambda}\gop_\lambda B\subset A\Bigr).
\end{gather*}

The collection of all $(n)\text{-}\bigvee\limits_i\dfrac{m_i}{\{\mathbf G_\lambda, \lambda\in M_i\}}$-fractoconvex (resp., $(n)\text{-}\{\mathbf G_\lambda, \lambda\in \Lambda\}$\--multiconvex) sets in $X$ will be called a {\itshape fractoconvexity} (resp., {\itshape multiconvexity}) and will be denoted by $(n)\text{-}\bigvee\limits_i\dfrac{m_i}{\{\mathbf G_\lambda, \lambda\in M_i\}}$ (resp., $(n)\text{-}\{\mathbf G_\lambda, \lambda\in \Lambda\}$). The pair $\left(X,(n)\text{-}\bigvee\limits_i\dfrac{m_i}{\{\mathbf G_\lambda, \lambda\in M_i\}}\right)$ will be called a {\itshape fractoconvex structure} (or, a {\itshape space with fractoconvexity}).

From these definitions we obtain that a multiconvexity is a particular case of a fractoconvexity, i.e. any $(n)\text{-}\{\mathbf G_\lambda, \lambda\in \Lambda\}$-multiconvex set is an $(n)\text{-}\dfrac{|\Lambda|}{\{\mathbf G_\lambda, \lambda\in\Lambda\}}$-fractoconvex set. Also, a set that is $n$-semiconvex with respect to $\mathbf G_1$ and $\mathbf G_2$ is an $(n)\text{-}\dfrac{1}{\{\mathbf G_1, \mathbf G_2\}}$\--fractoconvex set, a set that is $n$-biconvex with respect to $\mathbf G_1$ and $\mathbf G_2$ is an $(n)\text{-}\dfrac{2}{\{\mathbf G_1, \mathbf G_2\}}$\--fractoconvex set and is an $(n)\text{-}\{\mathbf G_1, \mathbf G_2\}$\--multiconvex set, a set that is convex with respect to an $n$-ary convexity $\mathbf G_1$ is an $(n)\text{-}\dfrac{1}{\{\mathbf G_1\}}$-fractoconvex set and is an $(n)\text{-}\{\mathbf G_1\}$-multiconvex set.

Let $\mathfrak G^{(\,n)}(X)$ be the collection of all $n$-ary convexities on $X$. By $\mathfrak F^{(\,n)}(X)$ (resp., $\mathfrak F^{(\,n)}_{\mathrm{fin}}(X)$) denote the collection of all $(n)$-fractoconvexities on $X$ with respect to all (resp., all finite) subsets of $\mathfrak G^{(\,n)}(X)$. It is obvious that $\mathfrak F^{(\,n)}_{\mathrm{fin}}(X)\subset\mathfrak F^{(\,n)}(X)$ and $\mathfrak F^{(\,n-1)}(X)\subset\mathfrak F^{(\,n)}(X)$. If not otherwise stated, {\slshape we assume that all convexities $\mathbf G_\lambda$, $\lambda\in\Lambda$ considered below are $n$-ary; additionally, the prefix $(n)$- for the fracto- and multiconvexities will be omitted}.

Take arbitrary fractoconvexities $\mathbf F_j=\bigvee\limits_i\dfrac{m^j_i}{\{\mathbf G^j_\lambda, \lambda\in M^j_i\}}\in\mathfrak F^{(\,n)}(X)$, $j\in\{1,2\}$. According to the definition of a fractoconvexity, we can also define a new fractoconvexity $\mathbf F_1\vee\mathbf F_2\in\mathfrak F^{(\,n)}(X)$ and a corresponding {\itshape operation $\vee\colon\mathfrak F^{(\,n)}(X)\times\mathfrak F^{(\,n)}(X)\to\mathfrak F^{(\,n)}(X)$} as follows:
\begin{gather*}
A\in\mathbf F_1\vee\mathbf F_2\Leftrightarrow\forall B\subset A\Bigl(|B|\leqslant n\Rightarrow\!\bigl(\exists j\in\{1,2\}\exists i\exists\Omega\subset M^j_i, |\Omega|=m^j_i: \bigcup\limits_{\lambda\in\Omega}\gop_\lambda B\subset A\bigr)\Bigr).
\end{gather*}

It is readily seen that the operation $\vee$ on the set $\mathfrak F^{(\,n)}_{\mathrm{fin}}(X)$ possesses the following properties (hereinafter, $k\geqslant 1$):

(i) $\vee$ is commutative and associative;

(ii) if $k<l$, then $\dfrac{k}{\{\mathbf G_1,\ldots,\mathbf G_k\}}\vee\dfrac{l}{\{\mathbf G_1,\ldots,\mathbf G_l\}}=\dfrac{k}{\{\mathbf G_1,\ldots,\mathbf G_k\}}$;

(iii) $\dfrac{k}{\{\mathbf G_1,\ldots,\mathbf G_m\}}=\bigvee\limits_{1\leqslant i_1<\ldots<i_k\leqslant m}\dfrac{k}{\{\mathbf G_{i_1},\ldots,\mathbf G_{i_k}\}}$.

Property (iii), in particularly, implies the equality $\dfrac{1}{\{\mathbf G_1,\ldots,\mathbf G_k\}}=\bigvee\limits^k_{i=1}\dfrac{1}{\{\mathbf G_i\}}$.

We now consider the set-theoretic intersection operation on fractoconvexities.

\begin{proposition} \label{prop1}
For any convexities $\mathbf G_1,\ldots,\mathbf G_k$, $k<\infty$, we have
\begin{gather*}
\dfrac{k}{\{\mathbf G_1,\ldots,\mathbf G_k\}}=\dfrac{1}{\{\mathbf G_1\cap\ldots\cap\mathbf G_k\}}=\dfrac{1}{\{\mathbf G_1\}}\cap\ldots\cap\dfrac{1}{\{\mathbf G_k\}}.
\end{gather*}
\end{proposition}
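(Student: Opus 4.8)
The plan is to identify all three fractoconvexities with the single convexity $\mathbf H:=\mathbf G_1\cap\cdots\cap\mathbf G_k$. First I note that $\mathbf H$ is again a convexity: it contains $X$ and is closed under arbitrary intersections, being an intersection of families with these properties. Write $\gop_{\mathbf H}$ for its associated convex hull. The conceptual linchpin is that, for any $n$-ary convexity $\mathbf G$, the fractoconvexity $\frac{1}{\{\mathbf G\}}$ coincides with $\mathbf G$ as a collection of sets: unwinding the definition, $A\in\frac{1}{\{\mathbf G\}}$ means exactly $\forall B\subset A(|B|\leqslant n\Rightarrow\gop B\subset A)$, which is precisely the $n$-arity criterion characterizing membership in $\mathbf G$. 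In particular $\frac{1}{\{\mathbf G_1\}}\cap\cdots\cap\frac{1}{\{\mathbf G_k\}}=\mathbf G_1\cap\cdots\cap\mathbf G_k=\mathbf H$, so it remains to show that the left- and middle-hand fractoconvexities also equal $\mathbf H$.

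For the left-hand term I would unwind $\frac{k}{\{\mathbf G_1,\ldots,\mathbf G_k\}}$: the partition consists of the single block $\{1,\ldots,k\}$ with $m=k$, so the only subset $\Omega$ of size $m$ is $\{1,\ldots,k\}$ itself, and membership of $A$ reduces to the requirement that $\gop_\lambda B\subset A$ for every $B\subset A$ with $|B|\leqslant n$ and every $\lambda$. This is simply the conjunction over $\lambda$ of the membership conditions for the $\frac{1}{\{\mathbf G_\lambda\}}$, so $\frac{k}{\{\mathbf G_1,\ldots,\mathbf G_k\}}=\bigcap_\lambda\frac{1}{\{\mathbf G_\lambda\}}=\mathbf H$, which already gives the equality of the first and third expressions.

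For the middle term I would prove $\frac{1}{\{\mathbf H\}}=\mathbf H$ by two inclusions. If $A\in\mathbf H$ and $B\subset A$, then $A$ is itself a member of $\mathbf H$ containing $B$, so $\gop_{\mathbf H}B\subset A$, whence $A\in\frac{1}{\{\mathbf H\}}$. Conversely, the inclusion $\mathbf H\subset\mathbf G_\lambda$ means the family of convex sets defining $\gop_\lambda B$ is larger than the one defining $\gop_{\mathbf H}B$, so $\gop_\lambda B\subset\gop_{\mathbf H}B$; hence if $A\in\frac{1}{\{\mathbf H\}}$ then $\gop_\lambda B\subset\gop_{\mathbf H}B\subset A$ for all $B\subset A$ with $|B|\leqslant n$ and all $\lambda$, so $A\in\mathbf G_\lambda$ for each $\lambda$ by $n$-arity, i.e. $A\in\mathbf H$. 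The only step requiring care is this last inclusion $\gop_\lambda B\subset\gop_{\mathbf H}B$ — getting the direction right (intersecting the convexities enlarges the hull) and then invoking $n$-arity separately for each $\lambda$; everything else is bookkeeping of the quantifiers. As a byproduct this argument certifies that $\mathbf H$ is itself $n$-ary, so $\frac{1}{\{\mathbf H\}}$ indeed lies in $\mathfrak F^{(\,n)}(X)$.
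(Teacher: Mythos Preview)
Your proof is correct and follows essentially the same route as the paper's: both hinge on the inclusion $\gop_\lambda B\subset\gop_{\mathbf H}B$ (coming from $\mathbf H\subset\mathbf G_\lambda$) together with the $n$-arity of each $\mathbf G_\lambda$, and both observe that $\frac{1}{\{\mathbf G\}}=\mathbf G$ for an $n$-ary convexity. The only cosmetic differences are that the paper reduces to $k=2$ and phrases the argument as ``$\mathbf H$ is $n$-ary'' rather than ``$\frac{1}{\{\mathbf H\}}=\mathbf H$'', which is the same assertion.
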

\begin{proof}
Without loss of generality, we put $k = 2$. We shall show that the convexity $\mathbf G_1\cap\mathbf G_2$ is $n$-ary. Indeed, let $\tilde\gop$ be the convex hull associated with $\mathbf G_1\cap\mathbf G_2$. From the properties of the convex hull operator it follows that
\begin{gather*}
A\in\mathbf G_1\cap\mathbf G_2\Rightarrow\forall B\subset A\bigl(|B|\leqslant n\Rightarrow\tilde\gop B\subset A\bigr).
\end{gather*}

On the other hand, if $\forall B\subset A\bigl(|B|\leqslant n\Rightarrow\tilde\gop B\subset A\bigr)$, then, in view of the inclusions $\gop_1 B,\gop_2 B\subset\tilde\gop B$, we have
\begin{gather*}
\forall B\subset A\bigl(|B|\leqslant n\Rightarrow\gop_1 B\subset A,\gop_2 B\subset A\bigr).
\end{gather*}
Since the convexities $\mathbf G_1$ and $\mathbf G_2$ is $n$-ary, we obtain $A\in\mathbf G_1\cap\mathbf G_2$, whence $\mathbf G_1\cap\mathbf G_2$ is $n$-ary.

Now, in our notation, we can write $\mathbf G_1\cap\mathbf G_2=\dfrac{1}{\{\mathbf G_1\cap\mathbf G_2\}}$. Since the convexities $\mathbf G_1$ and $\mathbf G_2$ are $n$-ary, the convexity $\mathbf G_1\cap\mathbf G_2$ coincides with the set $\dfrac{1}{\{\mathbf G_1\}}\cap\dfrac{1}{\{\mathbf G_2\}}$. Hence, the second required equality has been proved. In the same way, we obtain $A\in\dfrac{2}{\{\mathbf G_1,\mathbf G_2\}}\Leftrightarrow A\in\mathbf G_1\cap\mathbf G_2$, whence the first equality is true.
\end{proof}

Thus, if the cardinality in the numerator equals the cardinality of the denominator and $\mathbf G_i$ are $n$-ary for all $i=1,\ldots,k$, $k<\infty$, then the fractoconvexity $\dfrac{k}{\{\mathbf G_1,\ldots,\mathbf G_k\}}$ is the $n$-ary convexity $\mathbf G_1\cap\ldots\cap\mathbf G_k$. In other words, as in the case of biconvex sets, the collection of all $n$-ary multiconvex sets is an $n$-ary convexity.

\begin{corollary} \label{cor1}
For any fractoconvexity $\mathbf F=\bigvee\limits_i\dfrac{m_i}{\{\mathbf G_\lambda, \lambda\in M_i\}}\in\mathfrak F^{(\,n)}_{\mathrm{fin}}(X)$ we have
\begin{gather*}
\mathbf F=\bigvee\limits_{\substack{i;\\\Omega\subset M_i,|\Omega|=m_i}}\bigcap\limits_{\lambda_i\in\Omega}\dfrac{1}{\{\mathbf G_{\lambda_i}\}}.
\end{gather*}
\end{corollary}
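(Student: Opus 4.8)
The plan is to read off the identity as a chain of equalities obtained by applying property (iii) and Proposition~\ref{prop1} to the join defining $\mathbf F$, one block $M_i$ at a time. First I would record the data that make the cited results applicable: since $\mathbf F\in\mathfrak F^{(\,n)}_{\mathrm{fin}}(X)$, the index set $\Lambda$ is finite, so the partition $\{M_i\}$ has finitely many blocks, each $M_i$ is finite, and every $m_i$ is a finite cardinal with $m_i\leqslant|M_i|$. All joins below are therefore finite, so the binary operation $\vee$ extends unambiguously to them by property (i); and the standing assumption that each $\mathbf G_\lambda$ is $n$-ary supplies the hypothesis needed for Proposition~\ref{prop1}.

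Next I would rewrite a single summand. Fixing a block index $i$ and applying property (iii) with $m=|M_i|$ and $k=m_i$ to the convexities $\{\mathbf G_\lambda:\lambda\in M_i\}$ gives
\[
\frac{m_i}{\{\mathbf G_\lambda,\lambda\in M_i\}}=\bigvee_{\Omega\subset M_i,\,|\Omega|=m_i}\frac{m_i}{\{\mathbf G_\lambda,\lambda\in\Omega\}}.
\]
For each $\Omega$ appearing on the right we have $|\Omega|=m_i$, i.e. the numerator equals the number of convexities in the denominator, so Proposition~\ref{prop1} applies and yields $\dfrac{m_i}{\{\mathbf G_\lambda,\lambda\in\Omega\}}=\bigcap_{\lambda\in\Omega}\dfrac{1}{\{\mathbf G_\lambda\}}$. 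Substituting this into the display expresses $\dfrac{m_i}{\{\mathbf G_\lambda,\lambda\in M_i\}}$ as the join over the $m_i$-element subsets $\Omega\subset M_i$ of the intersections $\bigcap_{\lambda\in\Omega}\dfrac{1}{\{\mathbf G_\lambda\}}$.

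Finally I would take the join over all blocks $i$ of these expressions and use the commutativity and associativity of $\vee$ (property (i)) to merge the iterated join $\bigvee_i\bigvee_{\Omega}$ into the single join indexed by all pairs $(i,\Omega)$ with $\Omega\subset M_i$ and $|\Omega|=m_i$. This is precisely the right-hand side of the claimed identity, so the corollary follows.

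I do not expect a genuine obstacle: the argument is essentially bookkeeping that assembles two already-established facts. The only points requiring care are confirming the applicability hypotheses — finiteness of $\Lambda$ (so that property (iii) and Proposition~\ref{prop1}, both finite statements, apply) and $n$-arity of the $\mathbf G_\lambda$ — and verifying that the final index set is genuinely the disjoint family of pairs $(i,\Omega)$, a step that rests entirely on property (i).
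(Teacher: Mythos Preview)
Your proposal is correct and follows exactly the route the paper takes: the paper's proof is a single sentence stating that the identity follows from property~(iii) and Proposition~\ref{prop1}, and you have simply unpacked that sentence by applying~(iii) to each block $M_i$, then Proposition~\ref{prop1} to each resulting summand, and finally property~(i) to merge the joins.
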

\begin{proof}
Evidently, this equality follows from (iii) and Proposition~\ref{prop1}.
\end{proof}

\begin{proposition} \label{prop2}
The operations $\vee$ and $\cap$ satisfy the distributive laws on $\mathfrak F^{(\,n)}_{\mathrm{fin}}(X)$: $\forall\mathbf F_1,\mathbf F_2,\mathbf F_3\in\mathfrak F^{(\,n)}_{\mathrm{fin}}(X)$
\begin{gather*}
(\mathbf F_1\vee\mathbf F_2)\cap\mathbf F_3=(\mathbf F_1\cap\mathbf F_3)\vee(\mathbf F_2\cap\mathbf F_3),\\(\mathbf F_1\cap\mathbf F_2)\vee\mathbf F_3=(\mathbf F_1\vee\mathbf F_3)\cap(\mathbf F_2\vee\mathbf F_3).
\end{gather*}
\end{proposition}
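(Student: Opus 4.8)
The plan is to reduce both identities to the elementary Boolean distributive laws, applied separately for each admissible test set $B$. For a fractoconvexity $\mathbf F=\bigvee_i\frac{m_i}{\{\mathbf G_\lambda,\lambda\in M_i\}}$, a set $A\subset X$, and a finite $B\subset A$ with $|B|\leqslant n$, I would write $\pi_{\mathbf F}(A,B)$ for the proposition ``$\exists i\,\exists\Omega\subset M_i,\ |\Omega|=m_i:\ \bigcup_{\lambda\in\Omega}\gop_\lambda B\subset A$'' occurring in the definition, so that $A\in\mathbf F\Leftrightarrow\forall B\subset A\,(|B|\leqslant n\Rightarrow\pi_{\mathbf F}(A,B))$. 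The whole proof then rests on two facts: first, by the very definition of the operation $\vee$, one has $\pi_{\mathbf F_1\vee\mathbf F_2}(A,B)=\pi_{\mathbf F_1}(A,B)\vee\pi_{\mathbf F_2}(A,B)$; second, the set-theoretic intersection $\mathbf F_1\cap\mathbf F_2$ is again an element of $\mathfrak F^{(\,n)}_{\mathrm{fin}}(X)$ whose associated predicate is $\pi_{\mathbf F_1\cap\mathbf F_2}(A,B)=\pi_{\mathbf F_1}(A,B)\wedge\pi_{\mathbf F_2}(A,B)$.

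The second fact is the only non-formal step, and I would establish it first. The point is that the family of test sets $B$ quantified over depends on $A$ alone, so, since $\forall$ commutes with $\wedge$, $A\in\mathbf F_1\cap\mathbf F_2\Leftrightarrow(\forall B\,\pi_{\mathbf F_1})\wedge(\forall B\,\pi_{\mathbf F_2})\Leftrightarrow\forall B\,(\pi_{\mathbf F_1}\wedge\pi_{\mathbf F_2})$. It then remains to check that $\pi_{\mathbf F_1}\wedge\pi_{\mathbf F_2}$ is itself a fractoconvexity predicate: distributing the two existential quantifiers, the conjunction of $\bigcup_{\lambda\in\Omega_1}\gop_\lambda B\subset A$ and $\bigcup_{\mu\in\Omega_2}\gop_\mu B\subset A$ is exactly $\bigcup_{\lambda\in\Omega_1\cup\Omega_2}\gop_\lambda B\subset A$; hence, listing one block $\Omega_1\sqcup\Omega_2$ (taken in full) for each admissible pair $(\Omega_1,\Omega_2)$, one exhibits $\mathbf F_1\cap\mathbf F_2$ explicitly as a fractoconvexity of the required type, with finite support since $\mathbf F_1,\mathbf F_2\in\mathfrak F^{(\,n)}_{\mathrm{fin}}(X)$. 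This is the constructive counterpart of Proposition~\ref{prop1} and Corollary~\ref{cor1}, which already identify $\cap$ with intersection of convexities at the level of single blocks.

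With these two facts in hand, each identity is immediate by ``pointwise'' Boolean algebra (abbreviating $\pi_i=\pi_{\mathbf F_i}(A,B)$ at the given $B$). For the first law, $A\in(\mathbf F_1\vee\mathbf F_2)\cap\mathbf F_3\Leftrightarrow\forall B\,\bigl((\pi_1\vee\pi_2)\wedge\pi_3\bigr)$, while $A\in(\mathbf F_1\cap\mathbf F_3)\vee(\mathbf F_2\cap\mathbf F_3)\Leftrightarrow\forall B\,\bigl((\pi_1\wedge\pi_3)\vee(\pi_2\wedge\pi_3)\bigr)$, and the two right-hand sides agree because $(\pi_1\vee\pi_2)\wedge\pi_3=(\pi_1\wedge\pi_3)\vee(\pi_2\wedge\pi_3)$ for the fixed truth values $\pi_1,\pi_2,\pi_3$ at each $B$. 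The second law is proved the same way from $(\pi_1\wedge\pi_2)\vee\pi_3=(\pi_1\vee\pi_3)\wedge(\pi_2\vee\pi_3)$.

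I expect the main obstacle to be conceptual rather than computational, and it lies entirely in the bookkeeping of the second fact: one must be sure that $\cap$ really corresponds to placing $\wedge$ \emph{inside} the quantifier $\forall B$, matching the way $\vee$ was defined with $\vee$ inside $\forall B$. This alignment is essential, because $\forall B$ distributes over $\wedge$ but not over $\vee$; had intersection been read as a disjunction of two independent universal statements, or had $\vee$ been defined as plain set-union, the reduction to Boolean distributivity would break down. Once both operations are seen to act on the predicates $\pi_{\mathbf F}(A,B)$ as the connectives $\vee$ and $\wedge$ under a common $\forall B$, the distributive laws for $\mathfrak F^{(\,n)}_{\mathrm{fin}}(X)$ inherit directly from those of the two-element Boolean algebra.
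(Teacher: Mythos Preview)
Your argument is correct and is essentially the paper's own proof. The paper likewise reduces everything to Boolean distributivity applied inside the single quantifier $\forall B\subset A,\ |B|\leqslant n$, using precisely your two facts: that $\vee$ places a logical $\vee$ inside $\forall B$ by definition, and that set-theoretic $\cap$ places a logical $\wedge$ inside $\forall B$ because $\forall$ commutes with $\wedge$. The only cosmetic difference is that the paper first invokes properties (i)--(iii) and Proposition~\ref{prop1}/Corollary~\ref{cor1} to rewrite each $\mathbf F_j$ in the normal form $\bigvee\bigcap\frac{1}{\{\mathbf G_\lambda\}}$ and then displays the equivalence chain for fractoconvexities of type $\frac{1}{\{\mathbf G_\lambda,\lambda\in\Lambda\}}$, whereas you bypass this normalization by introducing the predicate $\pi_{\mathbf F}(A,B)$ and working directly with arbitrary $\mathbf F\in\mathfrak F^{(\,n)}_{\mathrm{fin}}(X)$; the underlying logic is identical.
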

\begin{proof}
These laws follow simply from (i)--(iii), Proposition~\ref{prop1}, and the distributivity of the operations $\vee$ and $\cap$ for the fractoconvexities $\dfrac{1}{\{\mathbf G_\lambda,\lambda\in\Lambda\}}\in\mathfrak F^{(\,n)}_{\mathrm{fin}}(X)$. The last property follows from the definition of $\vee$. For example, we show the distributivity of $\cap$ over $\vee$.
\begin{align*}
&A\in\left(\dfrac{1}{\{\mathbf G_\lambda,\lambda\in\Lambda_1\}}\vee\dfrac{1}{\{\mathbf G_\lambda,\lambda\in\Lambda_2\}}\right)\cap\dfrac{1}{\{\mathbf G_\lambda,\lambda\in\Lambda_3\}}\Leftrightarrow\\
&\Leftrightarrow\forall B\subset A\Bigl(|B|\leqslant n\Rightarrow\bigl(\bigl((\exists\lambda_1\in\Lambda_1: \gop_{\lambda_1} B\subset A)\vee(\exists\lambda_2\in\Lambda_2: \gop_{\lambda_2} B\subset A)\bigr)\wedge(\exists\lambda_3\in\Lambda_3: \gop_{\lambda_3} B\subset A)\bigr)\Bigr)\Leftrightarrow\\
&\Leftrightarrow\forall B\subset A\Bigl(|B|\leqslant n\Rightarrow\bigl((\exists\lambda_1\in\Lambda_1\exists\lambda_3\in\Lambda_3: \gop_{\lambda_1} B\cup\gop_{\lambda_3} B\subset A)\vee(\exists\lambda_2\in\Lambda_2\exists\lambda_3\in\Lambda_3:\\
&\gop_{\lambda_2} B\cup\gop_{\lambda_3} B\subset A)\bigr)\Bigr)\Leftrightarrow A\in\left(\dfrac{1}{\{\mathbf G_{\lambda},\lambda\in\Lambda_1\}}\cap\dfrac{1}{\{\mathbf G_{\lambda},\lambda\in\Lambda_3\}}\right)\vee\left(\dfrac{1}{\{\mathbf G_{\lambda},\lambda\in\Lambda_2\}}\cap\dfrac{1}{\{\mathbf G_{\lambda},\lambda\in\Lambda_3\}}\right).
\end{align*}
\end{proof}

In the same way, one can prove the absorption law:
\begin{gather*}
(\mathbf F_1\vee\mathbf F_2)\cap\mathbf F_1=\mathbf F_1,\quad (\mathbf F_1\cap\mathbf F_2)\vee\mathbf F_1=\mathbf F_1.
\end{gather*}

Let $\hat{\mathfrak F}^{(\,n)}_\mathrm{fin}(X)$ be some subfamily of $\mathfrak F^{(\,n)}_\mathrm{fin}(X)$ which is closed under a finite number of applications of the operations $\vee$ and $\cap$. Obviously, from the properties obtained above for these operations it follows that $\hat{\mathfrak F}^{(\,n)}_\mathrm{fin}(X)$ is a distributive lattice.

\section{Independent convexities}
Convexities $\mathbf G_\lambda\in\mathfrak G^{(\,n)}(X)$, $\lambda\in\Lambda$ will be called {\itshape mutually $(n)$-independent} or, briefly, {\itshape independent}, if the following condition holds
\begin{gather*}
\forall A\in(n)\text{-}\dfrac{1}{\{\mathbf G_\lambda,\lambda\in\Lambda\}}: A=\bigcap\limits_{\lambda\in\Lambda}\gop_\lambda A.
\end{gather*}

If the equality does not necessarily hold for all $A\in(n)\text{-}\dfrac{1}{\{\mathbf G_\lambda,\lambda\in\Lambda\}}$, then the maximal subfamily of $(n)\text{-}\dfrac{1}{\{\mathbf G_\lambda,\lambda\in\Lambda\}}$ for which the equality holds will be called the {\itshape $(n)$-independence domain} of the convexities $\mathbf G_\lambda,\lambda\in\Lambda$, and will be denoted by $(n)\text{-}\mathrm{idc}(\mathbf G_\lambda,\lambda\in\Lambda)$. The sets belonging to $(n)\text{-}\mathrm{idc}(\mathbf G_\lambda,\lambda\in\Lambda)$ will be called the {\itshape elements of $(n)$-independence} of the convexities $\mathbf G_\lambda,\lambda\in\Lambda$. In what follows, if not otherwise stated, the prefix $(n)$- will be omitted.

Given a set $M\subset\dfrac{1}{\{\mathbf G_\lambda,\lambda\in\Lambda\}}$ we consider the question whether the inclusion $M\subset\mathrm{idc}(\mathbf G_\lambda,\lambda\in\Lambda)$ is true. This question can be answered in two ways. First, the condition of convexities independence is directly verified for every $A\in M$. However, as was shown by examples in~\cite{4}, this way can be very laborious. Second, the required inclusion may be checked by using some uncomplicated sufficient condition of convexities independence. One of such conditions will be given by us in Proposition~\ref{prop3}.

The importance of the notion of independent convexities is that, in special cases, we can obtain statements about the separation property for two certain elements of independence of convexities, and the statements are analogous to the separation theorems for two convex sets. Moreover, these elements are separated by a set represented by intersection of some $\gop_\lambda$\--halfspaces. A recent investigation on this topic for semiconvex sets in $S^2$ has been carried out in \cite{4} by the author.

Now we shall give the following definition, which will play an important role in Lemma~\ref{lem1} and in Proposition~\ref{prop3}.

Two finitely defined (not necessarily $n$-ary) convexities $\mathbf G_1$ and $\mathbf G_2$ on $X$, $|X|\geqslant 4$, are said to be {\itshape conically independent} provided the following condition is true
\begin{gather*}
\forall n\geqslant 4\:\forall B\subset X, |B|=n-1\:\forall x_n\in X\backslash B\;\forall x\in\gop^{\cap}\!\left(B\cup\{x_n\}\right)\exists y_1,y_2\in\gop^{\cap}\!B:\\x\in\gop_1\{y_1,x_n\}\cap\gop_2\{y_2,x_n\}.
\end{gather*}
(Here and in what follows, we use the notation: $\gop^{\cap}\!A=\gop_1 A\cap\gop_2 A$.)

\begin{lemma} \label{lem1}
Let the convexities $\mathbf G_1$ and $\mathbf G_2$ be conically independent, and let the set $A$ satisfy the condition
\begin{equation}\label{3.1}
\forall x_1,x_2,x_3\in A: \gop^{\cap}\{x_1,x_2,x_3\}\subset A.
\end{equation}
Then we have $A=\gop^{\cap}\!A$.
\end{lemma}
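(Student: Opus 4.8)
The plan is to prove the nontrivial inclusion $\gop^{\cap}A\subset A$, since $A\subset\gop^{\cap}A$ holds automatically from the extensivity $A\subset\gop_1 A$ and $A\subset\gop_2 A$ of the two hull operators. First I would exploit the finite definability built into the hypothesis of conical independence to reduce the claim to finite subsets. Given $x\in\gop_1 A\cap\gop_2 A$, finite definability produces finite sets $B_1,B_2\subset A$ with $x\in\gop_1 B_1$ and $x\in\gop_2 B_2$; setting $B=B_1\cup B_2$ and invoking monotonicity of the two hulls gives $x\in\gop^{\cap}B$. Thus it suffices to prove the finitary statement
\begin{gather*}
\forall B\subset A\bigl(|B|<\infty\Rightarrow\gop^{\cap}B\subset A\bigr).
\end{gather*}

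I would establish this by induction on $|B|$. For the base case $|B|\leqslant 3$, writing $B=\{x_1,x_2,x_3\}$ (allowing repeated coordinates) makes it an immediate instance of hypothesis~(\ref{3.1}). For the inductive step, suppose $|B|=n\geqslant 4$ and that the statement holds for all subsets of $A$ of smaller cardinality. Write $B=\{x_1,\ldots,x_n\}$ and take any $x\in\gop^{\cap}B$. Since $n\geqslant 4$, conical independence applies and yields points $y_1,y_2\in\gop^{\cap}\{x_1,\ldots,x_{n-1}\}$ with $x\in\gop_1\{y_1,x_n\}\cap\gop_2\{y_2,x_n\}$. Because $\{x_1,\ldots,x_{n-1}\}$ has at most $n-1$ elements, the inductive hypothesis gives $y_1,y_2\in A$, and of course $x_n\in A$.

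The crux is then to collapse the two separate memberships into a single three-point hull. By monotonicity, $x\in\gop_1\{y_1,x_n\}\subset\gop_1\{y_1,y_2,x_n\}$ and $x\in\gop_2\{y_2,x_n\}\subset\gop_2\{y_1,y_2,x_n\}$, so $x\in\gop^{\cap}\{y_1,y_2,x_n\}$. As $\{y_1,y_2,x_n\}$ consists of at most three points of $A$, hypothesis~(\ref{3.1}) yields $\gop^{\cap}\{y_1,y_2,x_n\}\subset A$, whence $x\in A$, closing the induction.

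I do not expect a serious obstacle: conical independence is tailored precisely to drive this induction, reducing an $n$-point intersected hull to an $(n-1)$-point hull together with a three-point hull, and (\ref{3.1}) is exactly strong enough to absorb the latter. The only points deserving care are the bookkeeping at the base of the induction — ensuring (\ref{3.1}) legitimately covers one- and two-point sets by repeating coordinates, and, crucially, that it is applied to the \emph{auxiliary} triple $\{y_1,y_2,x_n\}$ rather than to the original $x_i$ — and the silent use of finite definability when passing from $A$ to the single finite set $B=B_1\cup B_2$. Note that $n$-arity of $\mathbf G_1,\mathbf G_2$ plays no role; only their finite definability and conical independence are invoked.
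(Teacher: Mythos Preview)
Your argument is correct and follows the same strategy as the paper: reduce to finite subsets via finite definability, then use conical independence to pass from $n$ points to $n-1$ points plus an auxiliary triple $\{y_1,y_2,x_n\}\subset A$, and absorb that triple with~(\ref{3.1}). The only difference is presentational---you package the descent as a clean induction on $|B|$, whereas the paper unrolls the iteration explicitly into a chain of points $y_{i_1\cdots i_k}$ and then climbs back up; your formulation is tidier but mathematically identical.
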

\begin{proof}
Since the convexities $\mathbf G_1$ and $\mathbf G_2$ are finitely defined, we see that
\begin{equation}\label{3.2}
\forall x\in\gop^{\cap}\!A\,\exists x_1,\ldots,x_n\in A: x\in\gop^{\cap}\{x_1,\ldots,x_n\}.
\end{equation}

From (\ref{3.1}) it follows that the proof is trivial if $n<4$; therefore, we may put $n\geqslant 4$. Iterating the conical independence of $\mathbf G_1$ and $\mathbf G_2$ in (\ref{3.2}), we get the chain of implications:
\begin{equation}\label{3.3}
\begin{gathered}
\exists y_1,y_2\in\gop^{\cap}\{x_1,\ldots,x_{n-1}\}: x\in\gop_1\{y_1,x_n\}\cap\gop_2\{y_2,x_n\}\Rightarrow\\
\Rightarrow\exists y_{11},y_{12},y_{21},y_{22}\in\gop^{\cap}\{x_1,\ldots,x_{n-2}\}: y_1\in\gop_1\{y_{11},x_{n-1}\}\cap\gop_2\{y_{12},x_{n-1}\},\\
y_2\in\gop_1\{y_{21},x_{n-1}\}\cap\gop_2\{y_{22},x_{n-1}\}\Rightarrow\ldots\Rightarrow \exists y_{\underbrace{\scriptstyle{1\ldots 11}}_{n-4}},y_{\underbrace{\scriptstyle{1\ldots 12}}_{n-4}},\ldots,y_{\underbrace{\scriptstyle{2\ldots 21}}_{n-4}},\\
y_{\underbrace{\scriptstyle{2\ldots 22}}_{n-4}}\in\gop^{\cap}\{x_1,\ldots,x_4\}: y_{\underbrace{\scriptstyle{1\ldots 11}}_{n-5}}\in\gop_1\{y_{\underbrace{\scriptstyle{1\ldots 11}}_{n-4}},x_5\}\cap\gop_2\{y_{\underbrace{\scriptstyle{1\ldots 12}}_{n-4}},x_5\},\ldots\Rightarrow\\
\Rightarrow\exists y_{\underbrace{\scriptstyle{1\ldots 11}}_{n-3}},y_{\underbrace{\scriptstyle{1\ldots 12}}_{n-3}},\ldots\in\gop^{\cap}\{x_1,x_2,x_3\}: y_{\underbrace{\scriptstyle{1\ldots 11}}_{n-4}}\in\gop_1\{y_{\underbrace{\scriptstyle{1\ldots 11}}_{n-3}},x_4\}\cap\gop_2\{y_{\underbrace{\scriptstyle{1\ldots 12}}_{n-3}},x_4\}\ldots
\end{gathered}
\end{equation}

Without loss of generality, consider the points $u=y_{\underbrace{\scriptstyle{1\ldots 11}}_{n-4}}$, $v_1=y_{\underbrace{\scriptstyle{1\ldots 11}}_{n-3}}$, and $v_2=y_{\underbrace{\scriptstyle{1\ldots 12}}_{n-3}}$. From (\ref{3.1}) it follows that $x_4,v_1,v_2\in A$; hence, again from (\ref{3.1}), $\gop^{\cap}\{x_4,v_1,v_2\}\subset A$. Obviously, if $u\in\{x_4,v_1,v_2\}$, then $u\in A$. Let $u\notin\{x_4,v_1,v_2\}$. Taking into account the properties of a convex hull operator, for each $i\in\{1,2\}$, we obtain
\begin{gather*}
\gop_1\{v_1,x_4\}\cap\gop_2\{v_2,x_4\}\subset\gop_i\{v_i,x_4\}\subset\gop_i\{x_4,v_1,v_2\}.
\end{gather*}
whence $\gop_1\{v_1,x_4\}\cap\gop_2\{v_2,x_4\}\subset\gop^{\cap}\{x_4,v_1,v_2\}$. Since $u\in\gop_1\{v_1,x_4\}\cap\gop_2\{v_2,x_4\}$, we see again that $u\in A$.

Moving back along the chain of implications (\ref{3.3}) in which the previous procedure applies to all points $y_{\displaystyle\cdot\!\cdot\!\cdot}$ and $x_{\displaystyle\cdot}$, we obtain $\gop_1\{y_1,x_n\}\cap\gop_2\{y_2,x_n\}\subset\gop^{\cap}\{x_n,y_1,y_2\}\subset A$, whence $x\in A$. From the arbitrariness of $x$, we conclude that $\gop_1 A\cap\gop_2 A\subset A$. The reverse inclusion is evident.
\end{proof}

\begin{proposition} \label{prop3}
Let the convexities $\mathbf G_1,\mathbf G_2\in\mathfrak G^{(\,3)}(X)$ be conically independent; then $\mathbf G_1$ and $\mathbf G_2$ are \linebreak independent.
\end{proposition}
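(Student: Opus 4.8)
The plan is to obtain Proposition~\ref{prop3} as an almost immediate consequence of Lemma~\ref{lem1}: the substantive descent through the conical independence property has already been carried out there, so what remains is only to put an arbitrary element of the fractoconvexity into a form to which that lemma applies. By definition, proving that $\mathbf{G}_1$ and $\mathbf{G}_2$ are independent means showing that $A=\gop^{\cap}\!A$ for every $A\in\dfrac{1}{\{\mathbf{G}_1,\mathbf{G}_2\}}$. I would therefore fix an arbitrary such $A$ and try to verify condition~(\ref{3.1}) for it, after which Lemma~\ref{lem1} finishes the argument.

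The key step is to read off~(\ref{3.1}) directly from the defining property of the fractoconvexity at arity $n=3$. Given any $x_1,x_2,x_3\in A$, put $B=\{x_1,x_2,x_3\}$, so that $|B|\leqslant 3$. Membership $A\in\dfrac{1}{\{\mathbf{G}_1,\mathbf{G}_2\}}$ then yields some $i\in\{1,2\}$ with $\gop_i B\subset A$, and since $\gop^{\cap}B=\gop_1 B\cap\gop_2 B\subset\gop_i B$, we get $\gop^{\cap}\{x_1,x_2,x_3\}\subset A$. As $x_1,x_2,x_3$ were arbitrary, this is precisely~(\ref{3.1}).

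With~(\ref{3.1}) in hand and $\mathbf{G}_1,\mathbf{G}_2$ conically independent by hypothesis, Lemma~\ref{lem1} gives $A=\gop^{\cap}\!A$, and the arbitrariness of $A$ establishes independence. I do not anticipate a genuine obstacle in this proposition itself; all the real difficulty sits in Lemma~\ref{lem1}, where the conical independence is iterated. The one point to watch is the arity bookkeeping: it is exactly the choice $n=3$ that lets the single disjunctive branch of the fractoconvexity definition fire on three-point sets, which is what upgrades the weak disjunction ``$\gop_1 B\subset A$ or $\gop_2 B\subset A$'' into the intersection-hull containment $\gop^{\cap}B\subset A$ demanded by the hypothesis of the lemma.
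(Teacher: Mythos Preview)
Your proposal is correct and follows essentially the same route as the paper: fix an arbitrary $A\in(3)\text{-}\dfrac{1}{\{\mathbf G_1,\mathbf G_2\}}$, use the defining property at three-point subsets to obtain $\gop^{\cap}\{x_1,x_2,x_3\}\subset\gop_i\{x_1,x_2,x_3\}\subset A$, and then invoke Lemma~\ref{lem1}. Your remark on the arity bookkeeping is apt and matches the paper's use of the $3$-ary hypothesis.
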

\begin{proof}
Consider an arbitrary set $A\in(3)\text{-}\dfrac{1}{\{\mathbf G_1,\mathbf G_2\}}$. The 3\--arity of $\mathbf G_1$ and $\mathbf G_2$ imply that for any $x_1,x_2,x_3\in A$ there exists an $i\in\{1,2\}$ for which $\gop_i\{x_1,x_2,x_3\}\subset A$. Since $\gop^{\cap}\{x_1,x_2,x_3\}\subset\gop_i\{x_1,x_2,x_3\}$, it follows that $A$ satisfy (\ref{3.1}). Thus, by Lemma~\ref{lem1} we have $A=\gop^{\cap}A$.
\end{proof}
\section{Examples of spaces with fractoconvexity}
To describe the fractoconvexities in Examples 1--3, we need the following construction \cite{4}.

Let $S$ be the 2-shpere in $\mathbb R^3$ with center at the origin, $B$ be the closed ball bounded by $S$, $C$ be an arbitrary fixed set in the interior of $B$, and the symbol $[,]$ denotes the line segment operator in $\mathbb R^3$. The convexities $\mathbf G(c), c\in C$ on $S$ are defined analogously to the convexity in the sense of Robinson \cite{5} but with respect to the points $c\in C$, respectively. This means that a set $A\subset S$ is $\mathbf G(c)$-convex iff, for two distinct points $x_1,x_2$ such that the straight line determined by them does not pass through the point $c$, the set cut out by the 2-dimensional cone with vertex $c$ and base $[x_1,x_2]$ is entirely contained in $A$. It is readily seen that all convexities $\mathbf G(c), c\in C$ are binary, and the segment $\gop_c\{x_1,x_2\}$ joining two points $x_1,x_2\in S$ coincides with the subset of $S$ mentioned above if $x_1,x_2$, and $c$ are non-collinear and equals $\{x_1,x_2\}$ otherwise.

{\bfseries\itshape Example 1.}

Let $C=\{c_0,c_1\}$; then the fractoconvexity $\mathbf F_1=(2)\text{-}\dfrac{1}{\{\mathbf G(c_0),\mathbf G(c_1)\}}$ is the family of all 2\--semiconvex sets with respect to $\mathbf G(c_0)$ and $\mathbf G(c_1)$. Among these 2\--semiconvex sets, the so-called $\gop_{01}$-regular 2\--semiconvex sets are very important. (A set $A$ is called $\gop_{01}$\--regular if there exists an open halfspace $H$ in $\mathbb R^3$ such that $c_0,c_1\in H$ and $A\subset H^c$.) It has been shown in \cite{4} that every  $\gop_{01}$-regular 2-semiconvex set belongs to the independence domain of $\mathbf G(c_0)$ and $\mathbf G(c_1)$.

{\bfseries\itshape Example 2.}

Let $C=\{c_\lambda:=\lambda c_0+(1-\lambda)c_1\mid\lambda\in[0,1]\}$ and let us introduce the convexities $\mathbf G'(c_\lambda)$, $c_\lambda\in C$, which are the restriction of $\mathbf G(c_\lambda)$, $c_\lambda\in C$ to the subfamilies of the sets that are $\gop_{01}$-regular with respect to some fixed open halfspace $H$ parallel to the segment $[c_0,c_1]$. Considering the fractoconvexity $\mathbf F_2=(2)\text{-}\dfrac{1}{\{\mathbf G'(c_\lambda),c_\lambda\in C\}}$, by analogy with the previous example, one can prove that the convexities $\mathbf G'(c_\lambda)$, $c_\lambda\in C$ are independent.

{\bfseries\itshape Example 3.}

Let $C$ be as in Example~2. Consider the multiconvexities
\begin{equation*}
\mathbf G_1=(2)\text{-}\left\{\mathbf G'(c_\lambda)\mid c_\lambda\in[c_0,(c_0+c_1)/2]\right\},
\mathbf G_2=(2)\text{-}\left\{\mathbf G'(c_\lambda)\mid c_\lambda\in[(c_0+c_1)/2,c_1]\right\},
\end{equation*}
and put $\mathbf F_3=(2)\text{-}\dfrac{1}{\{\mathbf G_1,\mathbf G_2\}}$. Obviously, these multiconvexities are binary. By definition of an $n$-ary convexity, they are 3-ary as well.

Consider the convexity $\mathbf G'((c_0+c_1)/2)$. By $\tilde\gop$ denote the corresponding convex hull operator. It is easily seen that
\begin{equation}\label{4.1}
\forall n\geqslant 2\,\forall x_1,\ldots,x_n\in X, x_i\ne x_j (i\ne j)\,\forall x\in\tilde\gop\{x_1,\ldots,x_n\}\exists y\in\tilde\gop\{x_1,\ldots,x_{n-1}\}:x\in\tilde\gop\{y,x_n\}.
\end{equation}
Moreover, the operators $\gop_1$, $\gop_2$, and $\tilde\gop$ are related to each other by the condition
\begin{equation}\label{4.2}
\forall n\in\mathbb N\,\forall x_1,\ldots,x_n\in X\,\gop^\cap\{x_1,\ldots,x_n\}=\tilde\gop\{x_1,\ldots,x_n\}.
\end{equation}

Combining (\ref{4.1}) and (\ref{4.2}), we see that the 3-ary convexities $\mathbf G_1$ and $\mathbf G_2$ are conically independent; therefore, considering the fractoconvexity $\mathbf F_3=(2)\text{-}\dfrac{1}{\{\mathbf G_1,\mathbf G_2\}}$ and using Proposition~\ref{prop3}, we conclude that these convexities are independent.

{\bfseries\itshape Example 4.}

Suppose that the binary convexities $\mathbf G_1$ and $\mathbf G_2$ and the fractoconvexity $\mathbf F_4$ are defined on $\mathbb Z$ as follows. Let $\mathbf G_1$ be the collection of all sets $A\cap\mathbb Z$, where $A\subset\mathbb R$ is a standard convex set. Suppose $f$ is a bijective function from $\mathbb Z$ to itself. Using $f$, for any $x_1,x_2\in\mathbb Z$ we define the segment $\gop_2\{x_1,x_2\}$ by the formula
\begin{gather*}
\gop_2\{x_1,x_2\}:=f\bigl(\gop_1\{f^{-1}(x_1),f^{-1}(x_2)\}\bigr).
\end{gather*}
We put $\mathbf G_2=\left\{A\subset\mathbb Z\mid\forall x_1,x_2\in A\,\gop_2\{x_1,x_2\}\subset A\right\}$.

If a set $A\subset\mathbb Z$ is bounded, then the operators $\gop_1$ and $\gop_2$ satisfy the following equalities:

1) $\gop_1 A=\mathbb Z\cap\{a_1\lambda+b_1(1-\lambda)\mid\lambda\in[0,1],a_1=\min A,b_1=\max A\}$;

2) $\gop_2 A:=f\bigl(\gop_1\{a_2,b_2\}\bigr)=\gop_2\bigl\{f(a_2),f(b_2)\bigr\}$, where $a_2=\min\limits_{x\in A}f^{-1}(x)$, $b_2=\max\limits_{x\in A}f^{-1}(x)$.

Consider the fractoconvexity $\mathbf F_4=(2)\text{-}\dfrac{1}{\{\mathbf G_1,\mathbf G_2\}}$. The following proposition is valid.

\begin{proposition} \label{prop4}
If a set $A\in\mathbf F_4$ is bounded, then $A\in\mathrm{idc}(\mathbf G_1,\mathbf G_2)$.
\end{proposition}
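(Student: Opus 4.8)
The goal is the equality $A=\gop^\cap A$, since this is exactly the defining condition for $A\in\mathrm{idc}(\mathbf G_1,\mathbf G_2)$. The inclusion $A\subset\gop^\cap A=\gop_1A\cap\gop_2A$ is immediate from the extensivity of $\gop_1,\gop_2$, so everything reduces to proving $\gop^\cap A\subset A$. As $A$ is a bounded subset of $\mathbb Z$ it is finite, so all of $\min A$, $\max A$, $\min_{x\in A}f^{-1}(x)$, $\max_{x\in A}f^{-1}(x)$ exist and the formulas 1)--2) apply.

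My plan is to pass to a two-dimensional picture that makes the argument transparent. I identify each integer $x$ with the pair $(x,f^{-1}(x))$; since $f$ is a bijection, distinct integers occupy distinct first coordinates and distinct second coordinates. In this picture $y\in\gop_1\{u,v\}$ holds iff the first coordinate of $y$ lies (weakly) between those of $u$ and $v$, and $y\in\gop_2\{u,v\}$ holds iff the second coordinate of $y$ lies between those of $u$ and $v$. Consequently $\gop^\cap A$ is exactly the set of grid points lying in the bounding box $[\min A,\max A]\times[\min_{x\in A}f^{-1}(x),\max_{x\in A}f^{-1}(x)]$, while the $2$-semiconvexity of $A$ says: for every pair $u,v\in A$, either the first-coordinate strip between $u,v$ or the second-coordinate strip between $u,v$ is contained in $A$.

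I would then argue by contradiction. Suppose a grid point $z=(c,d)$ of the bounding box is not in $A$. Because $\min A<c<\max A$ and $A$ is finite, there are unique $p,q\in A$ adjacent to $z$ in the first coordinate (largest first coordinate below $c$, smallest above $c$). The first-coordinate strip between $p,q$ contains $z\notin A$, so by $2$-semiconvexity the second-coordinate strip between $p,q$ lies in $A$; as it does not contain $z$, the value $d$ is strictly outside the interval spanned by $f^{-1}(p),f^{-1}(q)$. Symmetrically, with the second-coordinate neighbours $r,s\in A$ of $z$, the value $c$ is strictly outside the interval spanned by $r,s$.

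These two dichotomies give four cases, and by the symmetry of the construction under reversing either coordinate it suffices to treat one, say $d>\max(f^{-1}(p),f^{-1}(q))$ together with $c>\max(r,s)$. Here the pair $\{q,s\}$ straddles $z$ in both coordinates: $s<c<q$ (from $c>s$ and the choice $c<q$) and $f^{-1}(q)<d<f^{-1}(s)$ (from $d>f^{-1}(q)$ and the choice $d<f^{-1}(s)$). Hence $z$ lies in both $\gop_1\{q,s\}$ and $\gop_2\{q,s\}$, so neither is contained in $A$, contradicting $2$-semiconvexity applied to $B=\{q,s\}$. This yields $\gop^\cap A\subset A$. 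I expect the main obstacle to be organizing this final case analysis cleanly---choosing, in each of the four cases, the correct pair among $p,q,r,s$ whose two strips both capture $z$---whereas once the grid reformulation is set up, each case reduces to a short chain of inequalities.
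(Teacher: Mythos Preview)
Your argument is correct, but it follows a different route from the paper. The paper works entirely with the four extreme points $a_1=\min A$, $b_1=\max A$, $f(a_2)$, $f(b_2)$: from $\gop_1 A\not\subset A$ and $\gop_2 A\not\subset A$ it first deduces $\gop_2\{a_1,b_1\}\subset A$ and $\gop_1\{f(a_2),f(b_2)\}\subset A$, then traps a hypothetical $x\in\gop^\cap A\setminus A$ in (say) $\gop_1\{a_1,f(a_2)\}$, and by applying semiconvexity twice more to the pairs $\{a_1,f(a_2)\}$ and $\{a_1,f(b_2)\}$ obtains the contradiction $\gop_1\{a_1,f(a_2)\}\subset A$. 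Your proof instead introduces the embedding $x\mapsto(x,f^{-1}(x))$, reads $\gop^\cap A$ as the bounding box, and uses the \emph{nearest neighbours} $p,q,r,s$ of the missing point $z$ in each coordinate; the contradiction then arises from a single well-chosen pair (such as $\{q,s\}$) for which $z$ lies in both $\gop_1$ and $\gop_2$ segments. The grid picture makes the geometry transparent and the final step a one-line inequality check, at the cost of four symmetric cases; the paper's extreme-point argument avoids introducing neighbours but requires a longer chain of semiconvexity applications whose logic is harder to visualise. The symmetry you invoke is genuine (reversing either coordinate preserves the class of admissible bijections and the semiconvexity hypothesis), so handling one case really does suffice.
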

\begin{proof}
Since $A\subset\gop_1 A\cap\gop_2 A$, it suffices to verify the reverse inclusion. If $\gop_1 A\subset A$ or $\gop_2 A\subset A$, then proof is trivial. Therefore, we assume that $\gop_1 A\not\subset A$ and $\gop_2 A\not\subset A$.

From invertibility of the function $f$ and from the definition of the points $a_2$ and $b_2$ it follows that the points $f(a_2)$ and $f(b_2)$ belong to $A$. The set $A$ is semiconvex; hence, from the relations $\gop_1\{a_1,b_1\}=\gop_1 A\not\subset A$ and $\gop_2\bigl\{f(a_2),f(b_2)\bigr\}=\gop_2 A\not\subset A$, we obtain
\begin{equation}\label{4.3}
\begin{gathered}
\gop_2\{a_1,b_1\}=f\bigl(\gop_1\{f^{-1}(a_1),f^{-1}(b_1)\}\bigr)\subset A,\\
\gop_1\bigl\{f(a_2),f(b_2)\bigr\}\subset A.
\end{gathered}
\end{equation}

Suppose that the set $(\gop_1 A\cap\gop_2 A)\setminus A$ is nonempty, that is,
\begin{equation}\label{4.4}
\exists x\in\bigl(\gop_1\{a_1,b_1\}\cap\gop_2\bigl\{f(a_2),f(b_2)\bigr\}\bigr)\setminus A.
\end{equation}
Under this assumption and by (\ref{4.3}), we see that $x\notin\gop_1\bigl\{f(a_2),f(b_2)\bigr\}\subset A$. But from (\ref{4.4}) it follows that $x\in\gop_1\{a_1,b_1\}$; hence, we have the following possible locations of the point $x$: either $x\in\gop_1\{a_1,f(\cdot)\}$ or $x\in\gop_1\{f(\cdot),b_1\}$, where $f(\cdot)$ belongs to $\{f(a_2),f(b_2)\}$ and is determined depending on the  mutual position of $f(a_2)$ and $f(b_2)$ (see below). Both these cases are investigated equally. For example, consider only the first of them. We have two subcases:
\begin{gather*}
\text{either }x\in\gop_1\{a_1,f(a_2)\} \text{ if } f(a_2)\leqslant f(b_2) \text{; or }x\in\gop_1\{a_1,f(b_2)\} \text{ if } f(a_2)>f(b_2).
\end{gather*}
(Here we have taken into account that $f(a_2)=f\bigl(\min\limits_{x\in A} f^{-1}(x)\bigr)\geqslant\min A=a_1$.) Without loss of generality it can be investigated one subcase, for example, $f(a_2)\leqslant f(b_2)$.

Since $x\notin A$ and $x\in\gop_1\{a_1,f(a_2)\}$, we obtain
\begin{equation}\label{4.5}
\gop_1\{a_1,f(a_2)\}\not\subset A.
\end{equation}
Therefore, we have $\gop_2\{a_1,f(a_2)\}\subset A$, since $A$ is semiconvex. Hence, by the definition of $\gop_2$, we get
\begin{equation}\label{4.6}
f\bigl(\gop_1\{a_2,f^{-1}(a_1)\}\bigr)\subset A.
\end{equation}

From (\ref{4.4}) it follows that $x\in\gop_2\bigl\{f(a_2),f(b_2)\bigr\}$, in other words, that $x\in f(\gop_1\{a_2,b_2\})$. The definitions of the points $a_2$ and $b_2$ imply that $f^{-1}(a_1)\in\gop_1\{a_2,b_2\}$. Combining these remarks with (\ref{4.6}) and $x\notin A$, we obtain $x\in f\bigl(\gop_1\{f^{-1}(a_1),b_2\}\bigr)\not\subset A$. At the same time, $f\bigl(\gop_1\{f^{-1}(a_1),b_2\}\bigr)=\gop_2\bigl\{a_1,f(b_2)\bigr\}$; hence, $\gop_1\{a_1,f(b_2)\}\subset A$, since $A$ is semiconvex.

By virtue of the assumption $f(a_2)\leqslant f(b_2)$, we have $\gop_1\{a_1,f(a_2)\}\subset\gop_1\{a_1,f(b_2)\}$, whence $\gop_1\{a_1,f(a_2)\}\subset A$. But the last contradicts inclusion (\ref{4.5}). Thus, $(\gop_1 A\cap\gop_2 A)\setminus A=\varnothing$, and since $A\subset\gop_1 A\cap\gop_2 A$, the last equality is true iff $A=\gop_1 A\cap\gop_2 A$.
\end{proof}



\begin{thebibliography}{99}

\bibitem{1} V.P.~Soltan, {\em Introduction to the axiomatic theory of convexity}. Shtiinca, Kishinev, 1984. [in Russian].

\bibitem{2} M.~ Van~de~Vel, {\em Theory of convex structures}. Noth-Holland, Amsterdam, 1993.

\bibitem{3}	W.~Kubis, {\em Abstract convex structures in topology and set theory}. University of Silesia, Katowice, 1999.

\bibitem{4} A.M.~Dulliev, {\em Two Structures Based on Convexities on the 2-sphere}, Math. Notes {\bfseries 102}:2 (2017), 156--163. [Mat. Zametki, {\bfseries 102}:2 (2017), 186--196.]

\bibitem{5} C.V.~Robinson, {\em Spherical Theorems of Helly Type and Congruence Indices of Spherical Caps}, Amer. J. Math. {\bfseries 64}:1 (1942), 260--272.

\end{thebibliography}
\end{document}